\DeclareMathAlphabet{\mathcal}{OMS}{cmsy}{m}{n} 
\DeclareMathOperator{\C}{C}
\DeclareMathOperator{\Hom}{Hom}
\def\End{\mathop {\fam 0 End} \nolimits}
\def\oo#1{\mathbin {{}_{(#1)}}}
\patchcmd{\thebibliography}{\section*}{\paragraph}{}{}
\numberwithin{equation}{section}
\newtheorem{theorem}{Theorem}[section]
\newtheorem{lemma}[theorem]{Lemma}
\newtheorem{proposition}[theorem]{Proposition}
\newtheorem{corollary}[theorem]{Corollary}
\theoremstyle{definition}
\newtheorem{example}[theorem]{Example}
\newtheorem{remark}[theorem]{Remark}
\title{Hochschild cohomology of the algebra of conformal endomorphisms}
\author{H. Alhussein$^{1),2)}$}
\author{P. Kolesnikov$^{3),4)}$}
\address{$^{1)}$Siberian State University of Telecommunication and Informatics, Novosibirsk, Russia.}
\address{$^{2)}$Novosibirsk State University of Economics and Management, Novosibirsk,  Russia.}
\address{$^{3)}$Sobolev Institute of Mathematics, Novosibirsk, Russia.}
\address{$^{4)}$Higher School of Economics, Moscow, Russia.}
\def\Cend{\mathop {\fam 0 Cend} \nolimits}
\def\oo#1{\mathbin {{}_{(#1)}} }
\begin{document}

\maketitle

\section{Introduction}

The notion of a conformal (Lie) algebra emerged in \cite{KacValgBeginners} as a tool in the theory 
of vertex algebras which goes back to mathematical physics \cite{BPZ1983} and representation theory
(see, e.g., \cite{Borch}). From the algebraic point of view, the structure of a vertex algebra 
is a breed of two structures: a differential left-symmetric algebra and a Lie conformal algebra \cite{BK-Field2002}.

The structure theory of (finite) Lie conformal algebra was developed in \cite{DK1998}, irreducible representations of 
simple and indecomposable semisimple finite Lie conformal algebras were described in \cite{ChengKac}. 
Given a finite conformal module $M$ over a Lie conformal algebra $L$, the representation of $L$ on $M$ 
is a homomorphism from $L$ to the Lie conformal algebra of conformal endomorphisms $\mathrm{gc}\,(M)$, 
see \cite[Ch.~2]{KacValgBeginners}. The latter is an analogue of the ``ordinary'' Lie algebra $\mathrm{gl}\,(V)$ 
of a linear space $V$ in the category of conformal algebras. As in ordinary algebras, $\mathrm{gc}\,(M)$
is the commutator algebra of an {\em associative} conformal algebra $\Cend (M)$. Thus the study of associative conformal 
algebras (and $\Cend (M)$, in particular) is essential for representation theory of Lie conformal algebras 
and, as a corollary, for vertex algebras theory.
A systematic study of $\Cend (M)$ was performed in \cite{BKL2003}, its simple subalgebras were described in \cite{Kol2006Adv}. 
The most interesting case is when $M$ is a free $H$-module of rank $k$, 
then $\Cend (M)$ is denoted $\Cend_k$. 
This system plays the same role in the theory 
of conformal algebras as the matrix algebra $M_k(\Bbbk )$
does in the ordinary algebra.

The homological studies for conformal algebras starts from the paper \cite{BKV}. 
Conceptually, to define (co)chains, (co)cycles, and (co)boundaries for a particular class of algebras over a field $\Bbbk $,
one needs to know what a multilinear mapping is, 
how to combine such mappings, and how symmetric groups act on multilinear mappings.
All these notions have their analogues in the category of modules over cocommutative bialgebras, that is, 
these are pseudo-tensor categories \cite{BDK}.
In particular, the definition of Hochschild cohomologies for an associative algebra in the pseudo-tensor category 
over the polynomial bialgebra $H=\Bbbk [\partial ]$, where $\partial $ is a primitive element, coincides 
with the definition of Hochschild cohomology of associative conformal algebras in \cite{BKV}.

It is well-known since \cite{Hoch1943} that for the associative algebra $\End (V)$ of linear transformations of a finite-dimensional space $V$ all $n$th Hochschild cohomology groups are trivial for $n\ge 1$.
The problem of description of conformal Hochschild cohomologies of $\Cend (M)$ for a finite $H$-module $M$ 
was stated in \cite{BKV}. 
In \cite{Dolg2009}, it was shown that the second Hochschild cohomology group of $C=\Cend (M)$ is trivial for all 
conformal bimodules over~$C$, which was a partial solution of the problem from \cite{BKV}.
The purpose of this paper is to complete solving this problem and prove 
that all $n$th Hochschild cohomology groups of $\Cend (M)$ for $n\ge 2$ with coefficients 
in all conformal bimodules over $\Cend (M)$.
Note that the classical argument (see \cite{Hoch1943}) based on the isomorphism 
$\mathrm H^n(A,M)\simeq \mathrm H^{n-1}(A, \mathrm{Hom}\,(A,M))$
does not work for conformal algebras since 
 the analogue of $\mathrm{Hom}$ denoted $\mathrm{Chom}$ (see \cite{KacValgBeginners}) does not carry a structure of conformal bimodule 
due to locality issues.

As shown in \cite{BKV}, the calculation of conformal Hochschild cohomology 
$\mathrm H^\bullet (C,M)$ of an associative conformal algebra $C$ with coefficients in a conformal bimodule $M$ over $C$ 
is based on the ordinary Hochschild cohomology 
$\mathrm H^\bullet (\mathcal A_+(C), M)$, where $\mathcal A_+(C)$ is the positive part of the coefficient algebra of~$C$. 

For $C=\Cend_k$, 
the positive part $\mathcal A_+(\Cend_k)$ of its coefficient algebra is isomorphic to the matrix algebra 
over the first Weyl algebra $W_1$, i.e., the unital associative algebra 
generated by two elements $p$, $q$ such that $qp-pq=1$.

The series of Weyl algebras (and, in particular, the first one) is under intensive study in various areas of mathematics. 
Homological properties of these algebras 
were considered, for example, in \cite{GHL, Rine, Hart}. For instance, the global dimension of the 
Weyl algebra $W_n$, $n\ge 1$, essentially depends on the characteristic of the base field.
One of the by-products of this paper is an
explicit computation of the 3rd Hochschild cohomology group of the first Weyl algebra by means of the Anick resolution. We apply the Morse matching method to transform a bar-resolution of the first Weyl algebra 
into its Anick resolution and calculate explicitly 
$\mathrm H^3(W_1, M)$ for an arbitrary $W_1$-bimodule~$M$.

As a result, we solve a problem stated in \cite{BKV}
on the computation of Hochschild cohomologies of 
the conformal algebra $\Cend_k$: we prove 
$\mathrm H^n(\Cend_k, M)=0$ for all $n\ge 2$ and for all 
conformal bimodules $M$ over $\Cend_k$.

\section{Morse matching method for constructing the Anick resolution}\label{sec:MorseMatching}

The idea of D. Anick on the construction of a relatively 
small free resolution for an augmented algebra 
has shown its effectiveness in a series of applications
\cite{AK2020,A2021,A2022-Cn, A2022, Akl, lopatkin}.
Let us briefly observe the main points of this construction
and its application to the computation of Hoch\-schild cohomologies of associative algebras.
Suppose $\Lambda $ 
is a unital associative algebra equipped with a homomorphism 
$\varepsilon: \Lambda \to \Bbbk $, 
$\varepsilon(1)=1$ (augmentation).
Denote by $A$ the cokernel $\Lambda/\Bbbk $ of the inverse embedding $\eta : \Bbbk \to \Lambda $
and consider the two-sided bar resolution of free $\Lambda$-bimodules
\[
0\leftarrow \Bbbk \leftarrow \mathrm{B}_{0}
\leftarrow  \mathrm{B}_1 \leftarrow \dots
\leftarrow \mathrm{B}_n \leftarrow \mathrm{B}_{n+1} \leftarrow \dots ,
\]
where $\mathrm{B}_0 = \Lambda\otimes \Lambda  $, 
$\mathrm{B}_n 
= \Lambda \otimes A^{\otimes n}\otimes \Lambda $ for 
$n\ge 1$. 
We will denote a tensor $a_1\otimes \dots \otimes a_n \in A^{\otimes n}$ as $[a_1|\ldots |a_n]$ and omit the tensor 
product signs between $\Lambda $ and $A^{\otimes n}$.
The arrows $d_{n+1}: \mathrm{B}_{n+1} \to \mathrm{B}_n$
are $\Lambda $-bimodule homomorphisms given by 
\begin{equation}\label{eq:Bar-Differential}
d_{n+1}[a_1|\ldots |a_{n+1}]
= a_1[a_2|\ldots |a_{n+1}] 
+\sum\limits_{i=1}^n (-1)^i[a_1| \ldots |a_ia_{i+1}|\ldots |a_{n+1}] 
+ (-1)^{n+1} [a_1|\ldots |a_n] a_{n+1},
\end{equation}
for $n>0$, and 
\[
d_1: [a]\mapsto a\otimes 1 - 1\otimes a, \quad 
d_0: a\otimes b \mapsto \varepsilon(ab).
\]
If $M$ is an arbitrary unital $\Lambda $-bimodule 
then 
\[
\Hom_{\Lambda{-}\Lambda} (\mathrm B_n, M) \simeq 
\Hom (A^{\otimes n}, M)
\]
as linear spaces, and for every 
$\varphi \in \Hom_{\Lambda{-}\Lambda} (\mathrm B_n, M)$
the composition 
$\varphi d_{n+1}: \mathrm B_{n+1} \to M$
corresponds to the $\Bbbk $-linear map 
$\Delta^n (\varphi ): A^{\otimes (n+1)}\to M$
which is given by the Hochschild differential formula.

Therefore, if we start with an associative algebra $A$,
join an exterior identity to get 
$\Lambda = A\otimes \Bbbk 1$ with $\varepsilon(A)=0$, 
then the cochain complex 
\[
\big (
\Hom_{\Lambda{-}\Lambda} (\mathrm B_\bullet , M), \Delta^\bullet 
\big )
\]
coincides with Hochschild complex 
$\mathrm {C}^\bullet (A,M)$.

The bar resolution $(\mathrm B_\bullet , d_\bullet)$
is easy to construct but it is too large 
for particular computations. Therefore, it is reasonable 
to replace $(\mathrm B_\bullet , d_\bullet)$
with a smaller but homotopy equivalent resolution, 
e.g., the {\em Anick resolution} $(\mathrm A_\bullet, \delta_\bullet)$, 
\[
0\leftarrow \Bbbk \leftarrow \mathrm A_0 \leftarrow \mathrm A_1 \leftarrow \dots \leftarrow \mathrm A_n \leftarrow \mathrm A_{n+1} \leftarrow \dots, 
\quad \delta_{n+1}: \mathrm A_{n+1}\to \mathrm A_n.
\]
Then, given an $A$-bimodule (hence, a unital $\Lambda $-bimodule), the cohomologies of 
the complex 
\[
\big ( 
\Hom_{\Lambda{-}\Lambda} (\mathrm A_\bullet, M), \Delta^\bullet
\big ), \quad \Delta^{n}\varphi = \varphi \delta_{n+1}, 
\ \varphi \in \Hom_{\Lambda{-}\Lambda} (\mathrm A_n, M),
\]
coincide with the Hochschild cohomologies 
$\mathrm H^\bullet (A,M)$.

Suppose $X$ is a set of generators of the algebra $A$.
Denote by $X^*$ the set of nonempty words in $X$, 
and let $\Bbbk \langle X\rangle $ stand for 
the linear span of $X^*$, this is the free associative 
algebra generated by~$X$.

Let $\Sigma \subset \Bbbk \langle X\rangle $
be a Gr\"obner--Shirshov basis of $A$ relative 
to an appropriate monomial order (e.g., deg-lex order).
We will denote by $V = \overline \Sigma $ the set of 
principal parts of relations from~$\Sigma $
(called {\em obstructions}).
Recall that 
$\mathrm A_0 = \mathrm B_0 = \Lambda \otimes \Lambda $, 
$\mathrm A_n = 
\Lambda \otimes \Bbbk V^{(n-1)}\otimes \Lambda $, 
where $V^{(k)}$ stands for the set of {\em Anick $k$-chains}.
By definition (see \cite{Anick1983}), 
$V^{(0)}=\{[x] \mid x\in X\}$, 
$V^{(1)} = \{[x|s] \mid x\in X, s\in X^*, xs\in V\}$, 
and for $k\ge 2$ the set $V^{(k)}$
is constructed on the words in $X^*$ obtained by 
consecutive ``hooking'' of the words from 
$\overline \Sigma $.

This definition becomes transparent in the case when the defining relations $\Sigma $ contain at most quadratic monomials, so that all words in 
$V $ are of length two.
For $n \ge 1$, an Anick $n$-chain is a word 
$v=[x_{0}|\ldots |x_{n}] \in X^{*}$ such that $x_ix_{i+1}\in V$ for $i=0,\ldots,n-1$.

\begin{example}\label{exmp:UnivEnvelope}
Let $\mathfrak g$ be a Lie algebra over $\Bbbk $ with a linearly ordered basis $X$. Denote $[x,y]\in \Bbbk X$,
$x,y\in X$,
the Lie product in $\mathfrak g$. 
Set $\Sigma = \{ xy-yx-[x,y] \mid x,y\in X, x>y \}$,
$A=\Bbbk \langle X\rangle /(\Sigma )$. Then 
$\Lambda = A\oplus \Bbbk 1 $ is exactly the universal 
enveloping associative algebra $U(\mathfrak g)$.
Then $V^{(k)} = \{[x_0|x_1|\ldots |x_k] \mid x_0>x_1>\dots >x_k, x_i\in X \}$.
The elements of $V^{(k)}$ are in obvious one-to-one correspondence with the basis of $\wedge^{k+1}\mathfrak g$.
\end{example}

The Anick differentials $\delta_{n+1}:\mathrm A_{n+1}\to \mathrm A_n$ were computed in \cite{Anick1983} by means of a complicated 
inductive procedure. In order to make this computation easier, in \cite{JollWelker}  and, independently, in \cite{Skoldb}, it was 
proposed to use algebraic discrete Morse theory developed in \cite{formancell, formanguide}
to construct a smaller complex (of free modules) which is homotopy equivalent to a given one. 
In particular, given a bar resolution of an augmented algebra $\Lambda $, 
the resulting complex is the Anick resolution.

The Morse matching method for computing the Anick resolution 
\cite{JollWelker}, \cite{Skoldb} is also described in 
\cite{lopatkin, A2022}. 
In a few words, the problem is to choose a set of edges in 
the weighted directed graph describing the structure of 
the bar resolution. Then one has to transform the graph 
by means of inverting the matched edges. 
Inverting means not only switch of direction,
but also replacing the weight $c$ of the matched edge 
with $-c{-1}$.
In the resulting graph, the non-matched vertices 
(critical cells) are exactly the Anick chains. 
Finally, in order to calcuate the Anick differential $\delta_{n+1}$
on a chain $w$ from $V^{(n)}$ one has to track all paths 
from $w$ to vertices from $V^{(n-1)}$.
The weight of each path is equal to the product of the weights 
of all its edges.

\begin{example}\label{exmp:Heisen-3}
Let $\mathfrak g = H_3$ be the Heisenberg Lie algebra.
The universal enveloping algebra  $U(H_3)$ 
is generated by the elements $x,y,z$, 
relative to the following relations:
\[
xy=yx+z,\quad xz=zx,\quad yz=zy.
\]
Assume $x>y>z$. Then the Anick $n$-chains are:
\[
V^{(1)}=\{[x|y],[x|z],[y|z]\},
\quad
V^{(2)}=\{[x|y|z]\},
\quad
V^{(n)}=\emptyset,\ n\ge 3.
\]
In order to compute $\delta_3[x|y|z]$, consider a fragment of the 
bar resolution graph and choose a Morse matching (dashed edges on Fig.~\ref{Fig1}).
Tracking the paths and collecting similar terms lead to 
the following answer:
\[ 
\delta_3[x|y|z]=x[y|z]-[y|z]x+[x|z]y -y[x|z]+z[x|y]-[x|y]z.
\]
\begin{figure}
\includegraphics{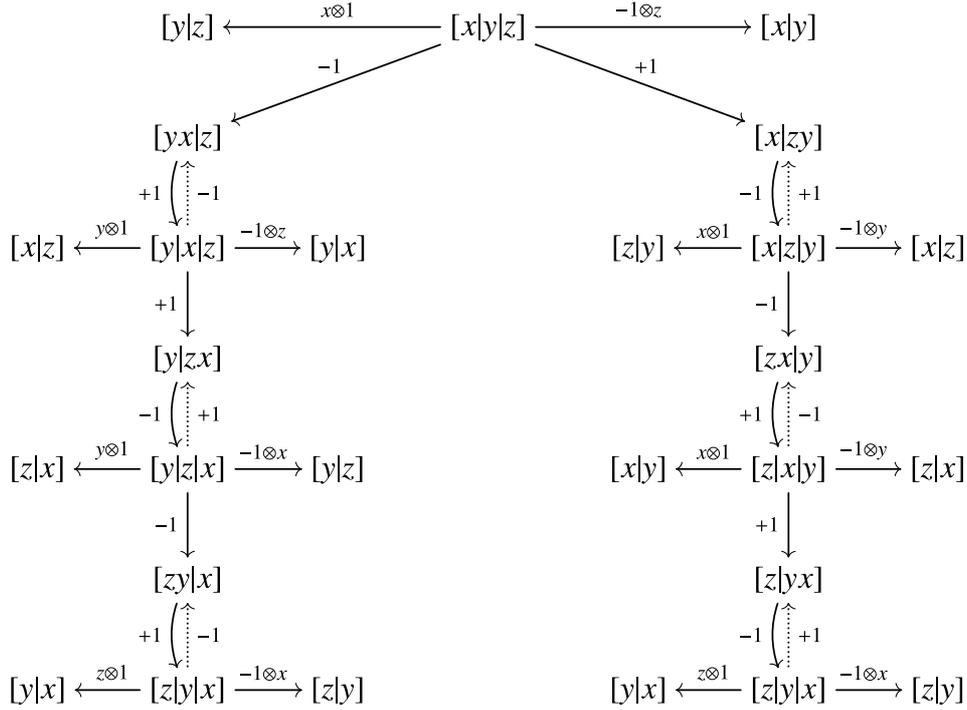}
\caption{Calculating the Anick differential of $[x|y|z]$ for $U(H_3)$}\label{Fig1}
\end{figure}
\end{example}

\begin{remark}
The differential in Example~\ref{exmp:Heisen-3}
corresponds to ``two-sided'' resolution. The restriction 
to the left module case (i.e., when multiplication by $x,y,z$ from the right is zero) leads us exactly to the Chevalley--Eilenberg differential for the Lie algebra $H_3$. This is a general observation: given a Lie algebra $\mathfrak g$,
the ``left'' Anick resolution for $U(\mathfrak g)$ 
coincides with the Chevalley--Eilenberg resolution for~$\mathfrak g$.
\end{remark}

When applied to the settings of Example~\ref{exmp:UnivEnvelope}, 
the Anick differential for $U(\mathfrak g)$ coincides with the Chevalley--Eilenberg differential for the Lie algebra~$\mathfrak g$.

\section{Conformal endomorphisms and the 1st Weyl algebra}

From now on, $\Bbbk $ is a field of characteristic zero, 
$H=\Bbbk [\partial ]$ is the polynomial algebra in one variable. 

Suppose $V$ and $M$ are two $H$-modules.
A {\em conformal homomorphism} \cite{KacValgBeginners} from $V$ to $M$
is a $\Bbbk $-linear map 
\[
\varphi _\lambda : V\to M[\lambda ]=\Bbbk [\partial,\lambda ]\otimes _H M
\]
such that 
\[
\varphi_\lambda (f(\partial ) v) = f(\partial+\lambda )\varphi_\lambda (v)
\]
for all $v\in V$, $f=f(\partial) \in H$.

If $M=V$ then the space of all conformal homomorphisms from $V$ to $M$
is denoted $\Cend (V)$. This is also an $H$-module:
\[
(\partial \varphi)_\lambda  = -\lambda \varphi_\lambda ,
\]
and if $V$ is a finitely generated $H$-module 
then $\Cend (V)$ is an {\em associative conformal algebra} \cite{KacValgBeginners}: 
for every $\varphi,\psi \in \Cend(V)$ we have 
\[
(\varphi \oo\lambda \psi ) \in \Cend (V)
\]
defined by the rule 
\[
(\varphi \oo\lambda \psi )_\mu  = \varphi_\lambda \psi _{\mu+\lambda }. 
\]
If $V$ is a free $H$-module of rank $k\in \mathbb N$ then 
$\Cend(V)$ is denoted $\Cend_k$.

Up to an isomorphism (see \cite{BKL2003, Kol2006Adv}), one may identify $\Cend_k$ with the space of all $(k\times k)$-matrices over the polynomial ring $\Bbbk [\partial , x]$ equipped with the operation 
\[
f(\partial, x)\oo\lambda g(\partial , x) = 
f(-\lambda , x)g(\partial+\lambda , x+\lambda ), 
\]
$f,g\in \Bbbk [\partial, x]$. For matrices, the operation $(\cdot\oo\lambda \cdot)$ is extended by the ordinary row-column rule.

Let $H$ act from the right on the Lawrent polynomials $\Bbbk [t,t^{-1}]$ in such a way that $\partial = -d/dt$. 
For every conformal algebra $C$ in the sense of \cite{KacValgBeginners}, 
one may define the {\em coefficient algebra} $\mathcal A(C)$
as the linear space 
$\Bbbk[t,t^{-1}]\otimes _H C$
equipped with the multiplication 
\begin{equation}\label{eq:CoeffProd}
a(n)b(m) = \sum\limits_{s\ge 0} \binom{n}{s} (a\oo{s} b)(n+m-s)
\end{equation}
where 
$t^n\otimes _H a = a(n)$ for $a\in C$, $n\in \mathbb Z$, 
and $(a\oo s b) $ stands for the coefficient at $\lambda^s/s!$ of 
$(a\oo\lambda b)$, $a,b\in C$.
For polynomials from $\Cend_1$, for example, we have 
\[
f(x)\oo{s} g(x) = f(x)\dfrac{d^s}{dx^s} g(x)
\]
by the Taylor formula.

The subspace of $\mathcal A(C)$ spanned by all $a(n)$, $n\ge 0$, $a\in C$, is a subalgebra of $\mathcal A(C)$ denoted $\mathcal A_+(C)$.
For instance, $\mathcal A(\Cend_1) = \Bbbk [t,t^{-1},x]$ as a linear space, the isomorphism identifies 
$t^n\otimes _H x^m$, $n\in \mathbb Z$, $m\in \mathbb Z_+$,
with $x^m t^n \in \Bbbk [t,t^{-1},x]$.
The product of two such monomials is calculated via 
\eqref{eq:CoeffProd}. For example, 
\[
t^n \cdot xt^m
 = (1\oo{0} x)t^{n+m} + n (1\oo{1} x) t^{n+m-1}
 = x t^{n+m} + n t^{n+m-1}, 
\]
so $tx = xt +1$, 
$t^{-1}x = xt^{-1} - t^{-2}$, etc.
Hence, $\mathcal A(\Cend_1)$
is isomorphic to 
the localization
of the first Weyl algebra 
$W_1 = \Bbbk \langle p,q\mid qp-pq=1\rangle $
relative to the multiplicative set
$\{q^s\mid s\ge 0\}$.
The positive part $\mathcal A_+(\Cend_1)$
is isomorphic to the Weyl algebra itself, so 
$\mathcal A_+(\Cend_k) \simeq M_k(W_1)$.

Let $C$ be an associative conformal algebra, 
and let $M$ be a conformal bimodule over~$C$.
Then $M$ is a bimodule over the ordinary associative algebra 
$A=\mathcal A_+(C)$, the action is given by
\[
a(n)\cdot u = a\oo{n} u,\quad u \cdot a(n) = \{u\oo{n} a\} = \sum\limits_{s\ge 0} (-1)^{n+s} \dfrac{1}{s!} \partial^s (u\oo{n+s} a),
\]
for 
$u\in M$, $a\in C$, $n\in \mathbb Z_+$.

The {\em basic Hochschild complex} \cite{BKV} 
of $C$ with coefficients in~$M$
is isomorphic to the Hochschild complex 
of $A=\mathcal A_+(C)$ with coefficients in the same bimodule~$M$.
There is a linear map 
\[
D_n : \C^n(  A,M) \to \C^n( A ,M)
\]
given by
\[
(D_n f)(a_1(m_1),\ldots, a_n(m_n) )
= \partial f(a_1(n_1),\ldots, a_n(m_n))
+ \sum\limits_{i=1}^n 
m_i f(a_1(n_1),\ldots, a_i(m_i-1), \ldots, a_n(m_n)),
\]
for $f\in \C^n( A ,M)$.
The maps $D_n$ are induced by the derivation $\partial: a(m)\mapsto -ma(m-1)$ on the algebra $A$.
Since $D_{n+1}\Delta^n = \Delta^n D_n$, the image 
$D_\bullet \C^\bullet (A,M)$ is a subcomplex 
of $\C^\bullet (A,M)$, and the quotient 
\begin{equation}\label{eq:RestrictedComplex}
\overline{\C}^\bullet(A,M) = \C^\bullet (A,M) / D_\bullet \C^\bullet (A,M)
\end{equation}
is isomorphic to the {\em reduced Hochschild complex}
of the conformal algebra $C$ 
(see \cite[Theorem 6.1, Corollary 6.1]{BKV}).

\begin{proposition}\label{prop:MainTool}
If $C$ is an associative conformal algebra, 
$A = \mathcal A_+(C)$,
$M$ is a conformal bimodule over $C$, and
$\mathrm H^q(A,M)=0$ for all $q\ge 3$,
then 
$\mathrm H^q(\overline{\C}^\bullet (A,M)) = 0$
for all $q\ge 3$.
\end{proposition}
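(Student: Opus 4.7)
The plan is to apply the long exact sequence in cohomology to the defining short exact sequence
\begin{equation*}
0 \to D_\bullet\C^\bullet(A,M) \to \C^\bullet(A,M) \to \overline{\C}^\bullet(A,M) \to 0.
\end{equation*}
Under the hypothesis $\mathrm H^q(\C^\bullet(A,M))=\mathrm H^q(A,M)=0$ for $q\ge 3$, the neighbouring LES terms vanish and the connecting homomorphism supplies an isomorphism
\begin{equation*}
\mathrm H^q(\overline{\C}^\bullet)\;\xrightarrow{\sim}\;\mathrm H^{q+1}(D_\bullet\C^\bullet) \qquad (q\ge 3),
\end{equation*}
so the proposition is equivalent to the vanishing $\mathrm H^r(D_\bullet\C^\bullet)=0$ for every $r\ge 4$.

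Since $D_\bullet$ commutes with $\Delta$, the kernel $\ker D_\bullet$ is likewise a subcomplex of $\C^\bullet(A,M)$, and one has the second short exact sequence
\begin{equation*}
0 \to \ker D_\bullet \to \C^\bullet(A,M) \xrightarrow{D} D_\bullet\C^\bullet(A,M) \to 0.
\end{equation*}
An identical LES manipulation, under the same hypothesis, yields $\mathrm H^r(D_\bullet\C^\bullet)\simeq\mathrm H^{r+1}(\ker D_\bullet)$ for $r\ge 3$; composing the two isomorphisms gives $\mathrm H^q(\overline{\C}^\bullet)\simeq\mathrm H^{q+2}(\ker D_\bullet)$ for $q\ge 3$, so the task reduces to showing $\mathrm H^s(\ker D_\bullet)=0$ for $s\ge 5$.

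This last vanishing is the main obstacle. The subcomplex $\ker D_\bullet$ coincides with $\Hom_H(A^{\otimes\bullet},M)$, the $H$-equivariant Hochschild cochain complex of $A=\mathcal A_+(C)$ with coefficients in $M$, so its cohomology is the $H$-equivariant Hochschild cohomology. Since $H=\Bbbk[\partial]$ has global dimension one, both Cartan--Eilenberg hypercohomology spectral sequences converging to $\mathbb R\Hom_H(\Bbbk,\C^\bullet(A,M))$ have only two nonvanishing columns: the first reads $E_2^{p,q}=\mathrm{Ext}^p_H(\Bbbk,\mathrm H^q(A,M))$, and the hypothesis forces its common abutment to vanish in total degree $\ge 4$; the second has columns $\mathrm H^\bullet(\ker D_\bullet)$ and $\mathrm H^\bullet(\overline{\C}^\bullet)$ linked by a $d_2$ differential that, via a chain-level identification, realises the iterated connecting homomorphism constructed above. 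Matching the two spectral sequences and pinning down this $d_2$ is the technical heart of the proof and where the remaining work lies; in the cases of interest one moreover has that the derivation $\partial$ on $A$ is inner (for $A\simeq M_k(W_1)$ one takes $\partial=\mathrm{ad}_{pI}$), which makes $D_\bullet$ null-homotopic on $\C^\bullet(A,M)$ and trivializes enough of the comparison to force the required vanishing.
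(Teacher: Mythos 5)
Your first step coincides with the paper's: the long exact sequence attached to $0\to D_\bullet\C^\bullet(A,M)\to\C^\bullet(A,M)\to\overline{\C}^\bullet(A,M)\to 0$ reduces the claim to the vanishing of $\mathrm H^{q+1}(D_\bullet\C^\bullet(A,M))$ for $q\ge 3$. But you never actually establish that vanishing, and this is the entire content of the proposition beyond the formal LES manipulation. The paper closes the gap in one line by invoking \cite[Proposition~2.1]{BKV}: the map $D_n$ is injective for $n\ge 1$, so $D_\bullet\C^\bullet$ is isomorphic \emph{as a complex} to $\C^\bullet$ in positive degrees, whence $\mathrm H^{q+1}(D_\bullet\C^\bullet)=\mathrm H^{q+1}(A,M)=0$ and the quotient complex is squeezed between zeros. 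Your proposal is missing exactly this ingredient; note that it would also collapse your own detour, since injectivity of $D_n$ for $n\ge 1$ forces $\ker D_n=0$ in positive degrees, making your second short exact sequence and the reduction to $\mathrm H^s(\ker D_\bullet)$ vacuous rather than ``the main obstacle.''

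The route you take instead does not work as written. First, you concede the point explicitly (``matching the two spectral sequences \dots is where the remaining work lies''), so the argument is incomplete by your own account. Second, the identification of $\ker D_\bullet$ with an $H$-equivariant cochain complex and the two hypercohomology spectral sequences are never shown to yield $\mathrm H^s(\ker D_\bullet)=0$ for $s\ge 5$; the hypothesis $\mathrm H^q(A,M)=0$ for $q\ge 3$ controls one spectral sequence's $E_2$-page, but you give no argument that the comparison forces the vanishing of the particular column you need. Third, the final appeal to $\partial$ being inner is out of scope: the proposition is stated for an arbitrary associative conformal algebra $C$, not only for $\Cend_k$ where $A\simeq M_k(W_1)$, and even in that case ``$D_\bullet$ is null-homotopic'' would only kill the map induced by $D$ on cohomology, not compute the cohomology of the image subcomplex $D_\bullet\C^\bullet$. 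The fix is to replace the entire second half of your argument with the cited injectivity of $D_\bullet$ in positive degrees.
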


\begin{proof}
The short exact sequence 
\[
0\to 
D_\bullet \C^\bullet (A,M) \to 
 \C^\bullet (A,M) \to
\overline{\C}^\bullet(A,M) 
\to 0
\]
gives rise to the long exact sequence of cohomologies
\[
\begin{aligned}
\dots \to{}& \mathrm H^q (D_\bullet \C^\bullet (A,M))
\to \mathrm H^q (\C^\bullet (A,M))
\to \mathrm H^q (\overline{\C}^\bullet (A,M)) \\
\to{}& \mathrm H^{q+1} (D_\bullet \C^\bullet (A,M))
\to \mathrm H^{q+1} (\C^\bullet (A,M))
\to \mathrm H^{q+1} (\overline{\C}^\bullet (A,M)) 
\to \dots 
\end{aligned}
\]
By \cite[Proposition 2.1]{BKV}, the complexes $\C^\bullet =\C^\bullet (A,M)$ and $D_\bullet \C^\bullet $ are isomorphic in positive degrees. 
Hence, under the conditions of the statement, 
$\mathrm H^q (\overline{\C}^\bullet (A,M))$, $q\ge 3$, is clamped 
between zeros, thus it is zero itself.
\end{proof}

\section{Two-sided Anick resolution for the first Weyl algebra}

In this section, we apply the Morse matching method described in 
Section \ref{sec:MorseMatching} to compute the 3rd Hochschild cohomology of the first Weyl algebra with coefficients in an arbitrary bimodule.

The Weyl algebra $W_1$ 
is generated by the elements $q,p,e$, 
relative to the following relations:
\[
qp=pq+e,\quad pe=p,\quad qe=q,\quad eq=q,\quad ep=p,\quad ee=e.
\]
Assume $q>p>e$. 
Then the sets of Anick $n$-chains for $n=1,2,3$ are easy to find:
\[
\begin{aligned}
V^{(1)}= {} & \{ [q|p],[q|e],[p|e],[e|q],[e|p],[e|e]\},
\\
V^{(2)}={} & \{ [q|p|e],[e|q|p],[q|e|p],[p|e|q],[q|e|e],[p|e|e],[e|e|q],[e|e|p], [e|q|e],\\
& [e|p|e],[q|e|q],[p|e|p],[e|e|e]\},
\\
V^{(3)}= {}& \{ [q|p|e|e],[e|q|p|e],[q|e|p|e],[p|e|q|e],[q|e|e|e],[p|e|e|e],\\ 
& [e|q|e|e], [e|p|e|e],[q|e|q|e],[p|e|p|e],[e|e|e|e],[e|e|q|p],[e|q|e|p],\\ 
& [e|p|e|q],[e|e|e|q],[e|e|e|p], [e|e|q|e], [e|e|p|e],[e|q|e|q],\\ 
& [e|p|e|p],[q|e|e|p],[p|e|e|q],[q|e|e|q],[p|e|e|p], [q|p|e|q], [q|p|e|p]\}.
\end{aligned}
\]
In order to compute $\mathrm H^3(W_1, M)$ for an arbitrary 
$W_1$-bimodule $M$ we need to know the Anick differentials on 
$V^{(2)}$ and $V^{(3)}$.

For example, consider a fragment of the graph constructed from the bar resolution of $\Lambda = W_1\oplus \Bbbk 1$
with the vertex $[q|p|e]$ with a matched edge $[p|q|e]\to [pq|e]$, see Fig.~\ref{Fig2}\,a. 
Note that $[p|q]$ is not an Anick chain thus should not be a critical cell. Indeed, the vertex $[p|q]$ belongs to another matched edge $[p|q]\to [pq]$ which also appears in the bar resolution graph, see Fig.~\ref{Fig2}\,b.
In a similar way, construct a fragment with the vertex $[e|q|p]$ on Fig.~\ref{Fig3}\,a:
all ending vertices of this fragment are either Anick chains or $[p|q]$ 
which is already matched. 
Note that the vertices $[e|p|q]$ and $[q|p|e]$ belong to 
matched edges.
As a final example, consider the fragment with $[e|q|p|e]$ (Fig.~\ref{Fig3}\,b): all ending vertices of this graph are either Anick chains or already matched ones.

In the sequel, we will often omit symbols $|$ in 
the elements of $V^{(n)}$.

\begin{figure}
\includegraphics{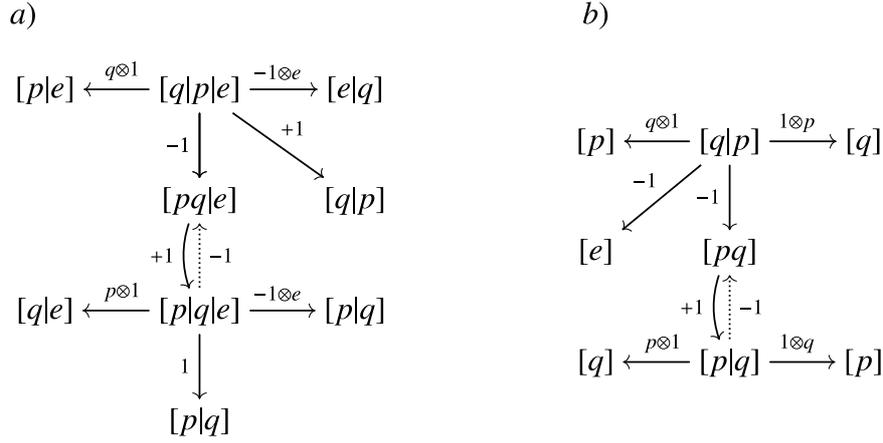}
\caption{Calculating the Anick differential of $[q|p|e]$ and $[q|p]$}\label{Fig2}
\end{figure}

\begin{figure}
\includegraphics{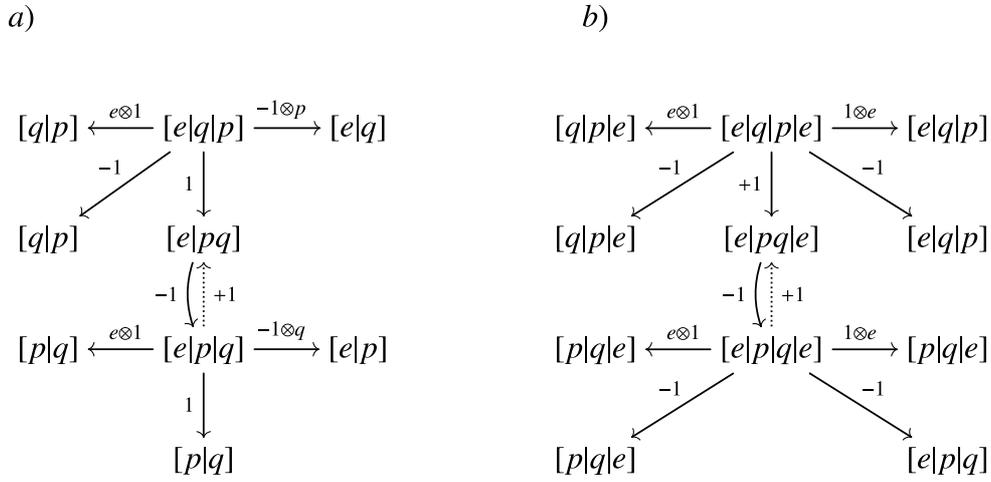}
\caption{Calculating the Anick differential of $[e|q|p]$ and $[e|q|p|e]$}\label{Fig3}
\end{figure}

In the same way, one may compute Anick differentials on the other 
chains from $V^{(2)}$ and $V^{(3)}$. As a resul, we get the following statements.

\begin{lemma}\label{lem:DiffV2}
The mapping $\delta _3: \mathrm A_3\to \mathrm A_2$ 
is defined by
\[
\begin{aligned}
  \delta_3[qpe]= {}&q[pe]-p[qe]-[ee]+[qp]-[qp]e, \\
  \delta_3[eqp]={}&e[qp]-[qp]+[ep]q+[ee]-[eq]p,\\
  \delta_3[qep]={}& q[ep]-[qe]p,\\
  \delta_3[peq]={}& p[eq]-[pe]q,\\
  \delta_3[qee]={}& q[ee]-[qe]e,\\
   \delta_3[pee]={}& p[ee]-[pe]e,\\
   \delta_3[eeq]={}& e[eq]-[ee]q,\\
    \delta_3[eep]={}& e[ep]-[ee]q,\\
     \delta_3[eqe]={}& e[qe]-[qe]+[eq]-[eq]e,\\
     \delta_3[epe] ={}& e[pe]-[pe]+[pe]-[ep]e,\\
     \delta_3[qeq]={}& q[eq]-[qe]q,\\
  \delta_3[pep]={}& p[ep]-[pe]p,\\
  \delta_3[eee] ={}& e[ee]-[ee]e.
\end{aligned}
\]
\end{lemma}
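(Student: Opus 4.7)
The plan is to apply the Morse matching construction from Section~\ref{sec:MorseMatching} to each of the thirteen chains of $V^{(2)}$. The underlying data is the Gröbner--Shirshov basis $\Sigma = \{qp-pq-e,\ qe-q,\ pe-p,\ eq-q,\ ep-p,\ ee-e\}$ of $W_1$ in the deg-lex order $q>p>e$, with obstruction set $V = \{qp, qe, pe, eq, ep, ee\}$. The matching pairs every bar-vertex whose word contains a leftmost leading monomial of $V$ with the vertex obtained by a single rewriting step, and each matched edge is inverted with weight $-c^{-1}$ exactly as described.

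For each $v = [x_0|x_1|x_2]\in V^{(2)}$ I would begin from the bar differential
\[
d_3[x_0|x_1|x_2] = x_0[x_1|x_2] - [x_0x_1|x_2] + [x_0|x_1x_2] - [x_0|x_1]x_2,
\]
and then track every weighted path from $v$ in the transformed graph until it reaches a critical cell in $V^{(1)}$. The outer summands $x_0[x_1|x_2]$ and $-[x_0|x_1]x_2$ already sit over the Anick $1$-chains $[x_1|x_2]$ and $[x_0|x_1]$ and contribute to $\delta_3 v$ directly. The two middle summands land on the noncritical vertices $[x_0x_1|x_2]$ and $[x_0|x_1x_2]$ and must be resolved through the matching. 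When the relevant reduction is one of the five degenerate rules $qe\to q$, $pe\to p$, $eq\to q$, $ep\to p$, $ee\to e$, the matched edge leads in a single step to an Anick $1$-chain and the computation terminates. The only non-trivial case is the rule $qp\to pq+e$: here the rewriting produces the Anick $1$-chain $[ee]$ together with a new noncritical vertex of the form $[pq|x_2]$ or $[x_0|pq]$, and this noncritical vertex must be pushed through a further matched edge of the type $[p|q]\to[pq]$ displayed in Fig.~\ref{Fig2}\,b. Exactly two chains of $V^{(2)}$, namely $[q|p|e]$ and $[e|q|p]$, trigger this cascade.

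I would then work through the thirteen chains in the order listed in the statement. The explicit walk-through for $[q|p|e]$ is given by Fig.~\ref{Fig2}\,a together with Fig.~\ref{Fig2}\,b and yields the first displayed formula; the walk-through for $[e|q|p]$ is given by Fig.~\ref{Fig3}\,a and yields the second. The remaining eleven chains involve only the five degenerate rules, so each of the two middle summands of $d_3$ collapses in a single matched step to a single Anick $1$-chain. Reading off the endpoints of all such paths, and combining the alternating signs of the bar differential with the $-c^{-1}$ weights produced by the inverted edges, reproduces each of the thirteen formulas of the statement verbatim.

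The only real obstacle is the bookkeeping in the two cascaded cases $[q|p|e]$ and $[e|q|p]$: one must identify every path through the two-step fragment, check that contributions arriving at the same endpoint combine with the correct signs, and, in particular, verify that the auxiliary vertex $[p|q]$ is killed by its own matching so that only the genuinely critical cells survive. Once this has been checked on the two explicit figures, the remaining eleven computations are strictly mechanical applications of a single matched reduction per middle summand.
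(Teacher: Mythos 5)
Your proposal follows exactly the paper's own method: the Morse matching on the bar resolution of $\Lambda=W_1\oplus\Bbbk 1$ relative to the Gr\"obner--Shirshov basis of $W_1$, with the two cascaded cases $[q|p|e]$ and $[e|q|p]$ (the only ones invoking the rewriting $qp\to pq+e$ and the auxiliary matched vertex $[p|q]$) handled via the inverted edges of Fig.~\ref{Fig2} and Fig.~\ref{Fig3}, and the remaining eleven chains resolved by a single degenerate reduction per middle summand of the bar differential. The only caveat is that a faithful execution of this procedure gives $e[ep]-[ee]p$ for $\delta_3[eep]$ and $e[pe]-[pe]+[ep]-[ep]e$ for $\delta_3[epe]$, so two of the displayed formulas contain typographical slips rather than any discrepancy with your argument.
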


\begin{lemma}\label{lem:DiffV3}
The mapping $\delta _4: \mathrm A_4\to \mathrm A_3$ 
is defined by
\[
\begin{aligned}
  \delta_4[qpee]={}&q[pee]-p[qee]-[eee]+[qpe]e, \\
  \delta_4[qeep] ={}& q[eep]-[qep]+[qee]p,\\
  \delta_4[peeq]={}&  p[eeq]-[peq]+[pee]q,\\
  \delta_4[qeee] ={}& q[eee]-[qee]+[qee]e,\\
   \delta_4[peee]={}& p[eee]-[pee]+[pee]e,\\
   \delta_4[eeeq]={}& e[eeq]-[eeq]+[eee]q,\\
    \delta_4[eeep]={}& e[eep]-[eep]+[eee]p,\\
     \delta_4[eeqe]={}& e[eqe]-[eeq]+[eeq]e,\\
     \delta_4[eepe]={}& e[epe]-[eep]+[eep]e,\\
     \delta_4[qeeq]={}& q[eeq]-[qeq]+[qee]q,\\
  \delta_4[peep]={}& p[eep]-[pep]+[pee]p,\\
  \delta_4[eeqp]={}& e[eqp]-[eep]q-[eee]+[eeq]p,\\
 \delta_4[eqpe]={}& e[qpe]-[qpe]+[eee]-[eqp]+[eqp]e,\\ 
 \delta_4[qepe]={}& q[epe]-[qep]+[qep]e,\\
  \delta_4[peqe]={}& p[eqe]-[peq]+[peq]e,\\
   \delta_4[eqee]={}& e[qee]-[qee]+[eqe]e,\\
    \delta_4[epee]={}& e[pee]-[pee]+[epe]e,\\
     \delta_4[qeqe]={}& q[eqe]-[qeq]+[qeq]e,\\
     \delta_4[pepe]={}& p[epe]-[pep]+[pep]e,\\
     \delta_4[eeee]={}& e[eee]-[eee]+[eee]e,\\
     \delta_4[eqep]={}& e[qep]-[qep]+[eqe]p,\\
     \delta_4[epeq]={}& e[peq]-[peq]+[epe]q,\\
     \delta_4[epep]={}& e[pep]-[pep]+[epe]p,\\
     \delta_4[eqeq]={}& e[qeq]-[qeq]+[eqe]q,\\
     \delta_4[qpeq]={}& q[peq]-[eeq]-p[qeq]+[qpe]q,\\
      \delta_4[qpep]={}& q[pep]-[eep]-p[qep]+[qpe]p.
\end{aligned}
\]
\end{lemma}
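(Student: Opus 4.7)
The plan is to prove Lemma~\ref{lem:DiffV3} by the same Morse matching procedure that produced Lemma~\ref{lem:DiffV2}, applied uniformly to each of the 26 Anick 3-chains in $V^{(3)}$. The starting point is the bar differential $d_3$ applied to $[x_0|x_1|x_2|x_3]$, which yields
\[
x_0[x_1|x_2|x_3]-[x_0x_1|x_2|x_3]+[x_0|x_1x_2|x_3]-[x_0|x_1|x_2x_3]+[x_0|x_1|x_2]x_3.
\]
Each middle term contains a product $x_ix_{i+1}$. If this product is not an obstruction in $V$, it must be reduced by the Gröbner--Shirshov relations of $W_1$ before one continues tracking the Morse paths; the reduction step is exactly the weight attached to the corresponding matched edge.

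First I would fix the Morse matching explicitly: for every word $w\in X^*$ containing a factor $uv\notin V$ which lies to the left of any obstruction, we match the edge collapsing the leftmost such $uv$. This is the standard normalized matching used in~\cite{JollWelker, Skoldb, lopatkin, A2022} and is precisely the one implicit in Lemma~\ref{lem:DiffV2} and in Figures~\ref{Fig2}, \ref{Fig3}. I would verify that the choice is consistent with the matched edges already shown (e.g.\ $[p|q]\to [pq]$ and $[p|q|e]\to [pq|e]$), so that the local pictures in the figures extend globally to a sound acyclic matching for $\mathrm B_\bullet$.

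Next, for each 3-chain $w=[x_0|x_1|x_2|x_3]\in V^{(3)}$ I would write down the small subgraph of $\mathrm B_3\oplus \mathrm B_2$ reachable from $w$: start from $d_4(w)$ and for every resulting 2-chain that sits at the tail of a matched edge, invert that edge (weight $-c^{-1}$) and continue, stopping only when every tail is a critical 2-chain, i.e.\ an element of $V^{(2)}$. The Anick differential $\delta_4(w)$ is then the signed sum of the weights of all paths from $w$ to $V^{(2)}$, exactly as in the example of $[e|q|p|e]$ pictured in Fig.~\ref{Fig3}\,b.

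The chains split into several natural families by their pattern: purely repeating-$e$ chains ($[eeee]$, $[qeee]$, $[peee]$, $[eeeq]$, $[eeep]$, etc.), chains containing a single obstruction $qp$ inside (like $[qpee]$, $[qpeq]$, $[qpep]$, $[eeqp]$, $[eqpe]$), and the ``symmetric'' pairs such as $([qeep],[peeq])$, $([qeeq],[peep])$, $([qeqe],[pepe])$, $([qepe],[peqe])$, $([eqep],[epeq])$, $([eqeq],[epep])$, $([eqee],[epee])$ obtained by exchanging the roles of $q$ and $p$. Exploiting these symmetries lets me halve the calculation: each $(q,p)$-swap is a graph automorphism that respects the matching, so the $\delta_4$-formula for one chain determines the formula for its partner. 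Only the chains where a $qp$-factor appears (producing an extra $e$-term from $qp=pq+e$) require genuine extra care.

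The main obstacle is bookkeeping. For chains like $[eqpe]$ the path $e[qpe]$ must be partially re-expanded because $[q|p|e]$ is itself a 2-chain whose own reduction history contributes the $-[qpe]+[eee]-[eqp]+[eqp]e$ correction; keeping the signs right under the ``invert-and-weight'' rule, and checking that no path has been missed (the graph is finite but branches every time a relation $qp=pq+e$, $pe=p$, $qe=q$, $eq=q$, $ep=p$, or $ee=e$ is used), is where mistakes are most likely. I would verify correctness by a global check: $\delta_3\delta_4=0$ on each $w\in V^{(3)}$, using the formulas of Lemma~\ref{lem:DiffV2}. This sanity check, applied to all 26 chains, both detects computational errors and confirms the whole list at once.
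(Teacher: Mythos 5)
Your proposal follows exactly the paper's method: the paper proves this lemma only by the worked Morse-matching examples in Figures~\ref{Fig2} and~\ref{Fig3} followed by ``in the same way, one may compute'' the remaining chains, which is precisely the path-tracking computation you describe. Your additional organizational devices (the $q\leftrightarrow p$ symmetry for chains without a $qp$-factor and the $\delta_3\delta_4=0$ sanity check) are sensible refinements of the same argument, not a different route.
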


\begin{theorem}\label{thm:WeylCohomology}
For an arbitrary $W_1$-bimodule $M$, the Hochchild cohomology group $\mathrm H^3(W_1,M)$ is trivial.
\end{theorem}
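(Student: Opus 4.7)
By the identification of the Hochschild cochain complex with the complex $\Hom_{\Lambda{-}\Lambda}(\mathrm A_\bullet,M)$ dual to the Anick resolution of $\Lambda=W_1\oplus\Bbbk 1$, the group $\mathrm H^3(W_1,M)$ is the cohomology at the middle term of
\[
\Hom_{\Lambda{-}\Lambda}(\mathrm A_2,M)\xrightarrow{\Delta^2}\Hom_{\Lambda{-}\Lambda}(\mathrm A_3,M)\xrightarrow{\Delta^3}\Hom_{\Lambda{-}\Lambda}(\mathrm A_4,M),
\]
with $\Delta^n(\psi)=\psi\delta_{n+1}$. A 3-cocycle $\varphi$ is determined by a family of $13$ values $\varphi_w\in M$ indexed by $w\in V^{(2)}$, subject to the $26$ relations $(\varphi\delta_4)(v)=0$, $v\in V^{(3)}$, read off Lemma~\ref{lem:DiffV3}. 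A 3-coboundary $\psi\delta_3$ is determined by $6$ values $\psi_v\in M$ for $v\in V^{(1)}$, via Lemma~\ref{lem:DiffV2}. The task is, starting from an arbitrary cocycle $\varphi$, to construct a primitive $\psi$ with $\psi\delta_3=\varphi$.

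The structural fact I would exploit is that $e$ is the formal unit of $W_1$, so it acts as the identity on every $W_1$-bimodule $M$. Substituting this into Lemma~\ref{lem:DiffV3} simplifies most of the 26 cocycle relations dramatically. For instance, the one from $[eeee]$ forces $\varphi_{eee}=0$; each of $[qeee],[peee],[eeeq],[eeep],[eeqe],[eepe]$ collapses to an identification between a ``padded'' chain value and one of $\varphi_{qee},\varphi_{pee},\varphi_{eeq},\varphi_{eep}$; the relations from $[eqee],[epee],[qeqe],[pepe],[eqep],[epeq],[eqeq],[epep]$ express the ``mixed'' values $\varphi_{eqe},\varphi_{epe},\varphi_{qeq},\varphi_{pep}$ in terms of $\varphi_{qee},\varphi_{pee},\varphi_{eeq},\varphi_{eep}$; and the conditions from $[qeep],[peeq],[qeeq],[peep]$ couple $\varphi_{qep},\varphi_{peq}$ to the same short list. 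The only genuinely nontrivial information about the ``top'' values $\varphi_{qpe}$ and $\varphi_{eqp}$ is carried by the relations from $[qpee],[eqpe],[eeqp],[qpeq],[qpep]$.

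In the second step, I would construct $\psi$ on $V^{(1)}$ by reverse-engineering the formulas of Lemma~\ref{lem:DiffV2}. Set $\psi_{ee}:=0$ (consistent, since $\delta_3[eee]=e[ee]-[ee]e$ kills $\psi_{ee}$ when $e$ acts as identity, matching $\varphi_{eee}=0$). Then define $\psi_{qe},\psi_{pe},\psi_{eq},\psi_{ep}$ so that the formulas of Lemma~\ref{lem:DiffV2} for $\varphi_{qee},\varphi_{pee},\varphi_{eeq},\varphi_{eep}$ hold by construction, and finally extract $\psi_{qp}$ from the formula for $\varphi_{qpe}$. It then remains to verify chain by chain, for each of the other twelve elements of $V^{(2)}$, that $(\psi\delta_3)(w)=\varphi_w$ with the $\psi_v$ just fixed. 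Each of these verifications should reduce, after substitution and using that $e$ acts as the identity, to one of the simplified cocycle identities from the first step.

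\textbf{Main obstacle.} The argument is entirely linear-algebraic, but the bulk of the work is bookkeeping: 26 cocycle relations must be parsed correctly and 13 identities of the form $\varphi_w=(\psi\delta_3)(w)$ must be checked, each with its proper signs. The critical compatibility that is not automatic is that the two independent routes to $\psi_{qp}$, coming from the formulas for $\varphi_{qpe}$ and for $\varphi_{eqp}$, must agree; this should be forced precisely by the cocycle relation on $[eqpe]$ in combination with those on $[qeep]$ and $[peeq]$. Analogous compatibilities for the deeper chains $[eeqp],[qpeq],[qpep]$ are expected to fall out of the corresponding cocycle relations. Once these checks are carried through, the constructed $\psi$ exhibits $\varphi$ as a coboundary and $\mathrm H^3(W_1,M)=0$ follows.
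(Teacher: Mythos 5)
Your overall strategy is the same as the paper's: dualize the Anick resolution of $\Lambda=W_1\oplus\Bbbk 1$, read the $26$ cocycle conditions off Lemma~\ref{lem:DiffV3}, and reverse-engineer a primitive $\psi$ on $V^{(1)}$ from Lemma~\ref{lem:DiffV2}. However, there is one genuine gap: your structural premise that ``$e$ is the formal unit of $W_1$, so it acts as the identity on \emph{every} $W_1$-bimodule $M$'' is false, and the theorem is stated for arbitrary bimodules. The element $e$ is only an internal idempotent of $W_1$; the resolution is built over $\Lambda=W_1\oplus\Bbbk 1$ with an \emph{external} unit adjoined precisely so that non-unital bimodules are covered (this generality is needed later, since the bimodules over $\mathcal A_+(\Cend_k)$ coming from conformal bimodules need not be unital over the internal identity). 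An arbitrary $M$ splits into four Peirce components $M_{i,j}$ on which $eu=iu$, $ue=ju$, and your argument only treats $M_{1,1}$.

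The omission matters because the shape of the computation changes on the other components. For example, on $M_{1,1}$ the relation from $[eeee]$ does force $\varphi[eee]=0$ (as you say), and one may take $\psi[ee]=\psi[qp]=0$; but on $M_{1,0}$ the same relation reads $\varphi[eee]-\varphi[eee]+0=0$ and gives no information, $\varphi[eee]$ survives as a free parameter, and the primitive must be chosen with $\psi[ee]=\varphi[eee]$ and $\psi[qp]=\varphi[qpe]$. So the list of ``free'' values of $\varphi$ and the definition of $\psi$ both depend on $(i,j)$, and the three non-unital cases require their own (parallel but not identical) verifications. Once you add the Peirce decomposition and run your scheme separately on each $M_{i,j}$, the proof closes; as written, it establishes the theorem only for unital bimodules. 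The remaining bookkeeping concerns you raise (consistency of the two routes to $\psi_{qp}$, etc.) are handled correctly by your plan and do resolve as you expect.
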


\begin{proof}
It is enough to find the respective cohomology group
of the complex 
$\Hom_{\Lambda{-}\Lambda} (\mathrm A_\bullet , M)$, 
where $\Lambda = W_1\oplus \Bbbk 1$, as above.

Note that an arbitrary bimodule $M$ over $W_1$ is a direct sum of four components:
\[
M = M_{1,1}\oplus M_{0,1}\oplus M_{1,0}\oplus M_{0,0},
\]
where the identity element $e\in W_1$ act on $M_{i,j}$
in such a way that 
$eu= iu$, $ue = ju$, for $u\in M_{i,j}$, 
$i,j\in \{0,1\}$. Hence, we may consider cohomologies 
with coefficients on the summands $M_{i,j}$ separately.

First, assume $M=M_{1,1}$, i.e., $eu=ue=u$ for all $u\in M$. 
Suppose $\varphi : \mathrm A_3 \to M$ is a cocycle, 
i.e., $\Delta^3(\varphi)=\varphi \delta_{4}=0$.
Apply $\varphi $ to all relations in Lemma~\ref{lem:DiffV3}: 
since zero emerges in all right-hand sides, we get the following 
relations on the values of $\varphi $ on the basis of $\mathrm A_3$
as of a free $\Lambda $-bimodule:
\begin{equation}\label{eq:CocycleRelations}
\begin{aligned}
\varphi[qpe]={}& -q\varphi[pee]+p\varphi[qee]+\varphi[eee], \\
\varphi[qep]={}& q\varphi[eep]+\varphi[qee]p,\\
\varphi[peq] ={}&p\varphi[eeq]+\varphi[pee]q,\\
  q\varphi[eee]={}&  p\varphi[eee] =  \varphi[eee]q 
            = \varphi[eee]p = \varphi[eqe] = \varphi[epe]=0,\\
    \varphi[qeq]&=q\varphi[eeq]+\varphi[qee]q,\\
\varphi[pep]&=p\varphi[eep]+\varphi[pee]p,\\ \varphi[eqp]&=\varphi[eep]q+\varphi[eee]-\varphi[eeq]p.
\end{aligned}
\end{equation}
As a corollary, 
\[
\varphi[eee] =e\varphi[eee]=q(p\varphi[eee])-p(q\varphi[eee])=0.
\]
Hence, $\varphi $ is completely determined by its values
\[
\varphi[eeq],\ \varphi[eep],\ \varphi[qee],\ \varphi[pee].
\]
Let us define $\psi\in \Hom_{\Lambda{-}\Lambda }(\mathrm A_2,M)$ 
in such a way that 
\[ 
\psi[eq]=\varphi[eeq],
\ \psi[ep]=\varphi[eep],
\ \psi[qe]=-\varphi[qee],
\ \psi[pe]=-\varphi[pee],
\]
and $ \psi[ee]=\psi[qp] = 0$.
Then $\Delta^2(\psi ) = \psi \delta_3$ is a coboundary, and 
\[
\begin{aligned}
(\psi \delta_3)[eeq]&=e\psi[eq]-\psi[ee]q=\varphi[eeq]+0=\varphi[eeq],\\
(\psi \delta_3)[eep]&=e\psi[ep]-\psi[ee]p=\varphi[eep]+0=\varphi[eep], \\
(\psi \delta_3)[qee]&=q\psi[ee]-\psi[qe]e=0+\varphi[qee]=\varphi[qee],\\
(\psi \delta_3)[pee]&=p\psi[ee]-\psi[pe]e=0+\varphi[pee]=\varphi[pee].
\end{aligned}
\]
Hence, $\Delta^2(\psi )= \varphi $, i.e., every 3-cocycle is a coboundary, so $\mathrm H^3(W_1,M)=0$ for every bimodule $M$ 
over~$W_1$.

Next, assume $M= M_{1,0}$, i.e., 
$eu=u$ and $ue=0$ for all $u\in M$. 
It follows from \ref{eq:CocycleRelations} that 
\begin{equation}\label{eq:CocycleRelations left}
\begin{gathered}
q\varphi[pee] -p\varphi[qee]-\varphi[eee]=0, 
\quad 
\varphi[qep]= q\varphi[eep],\quad 
\varphi[peq] = p\varphi[eeq],\\
\varphi[qee]= q[eee],\quad 
\varphi[pee]= p[eee],\quad
\varphi[eqe]= [eeq],\quad
\varphi[epe]= [eep],\\
\varphi[qeq]= q\varphi[eeq],\quad
\varphi[pep]= p\varphi[eep],\quad \varphi[eqp]=\varphi[eee].
\end{gathered}
\end{equation}
Therefore, $\varphi $ is completely determined by its values
$\varphi[eeq]$, $\varphi[eep]$, $\varphi[eee]$, $\varphi[qpe]$.
Let us define $\psi\in \Hom_{\Lambda }(\mathrm A_2,M)$ 
in such a way that 
\begin{gather*}
\psi[eq]=\varphi[eeq],
\ \psi[ep]=\varphi[eep],
\ \psi[qe]=\varphi[qee],\\
\ \psi[pe]=\varphi[pee],
\ \psi[ee]=\varphi[eee],
\ \psi[qp]=\varphi[qpe].
\end{gather*}
Then $\Delta^2(\psi ) = \psi \delta_3$ is a coboundary, and 
\[
\begin{aligned}
(\psi \delta_3)[eeq]&=e\psi[eq]=\psi[eq]=\varphi[eeq],\\
(\psi \delta_3)[eep]&=e\psi[ep]=\psi[ep]=\varphi[eep], \\
(\psi \delta_3)[eee]&=e\psi[ee]=\varphi[eee]=\varphi[eee],\\
(\psi \delta_3)[qpe]&=q\psi[pe]-p\psi[qe]-\psi[ee]+\psi[qp]\\
&=q\varphi[pee]-p\varphi[qee]-\varphi[eee]+\varphi[qpe]\\
&=0+\varphi[qpe]=\varphi[qpe].
\end{aligned}
\]
Hence, $\Delta^2(\psi )= \varphi $, i.e., every 3-cocycle is a coboundary, so $\mathrm H^3(W_1,M)=0$.

The cases of right-unital ($M_{0,1}$) and trivial 
($M_{0,0}$) modules are completely 
analogous.
\end{proof}

Since for every associative algebra $A$ and for every 
$A$-bimodule $M$ we have
$\mathrm H^{n+1}(A,M) = \mathrm H^{n}(A, \Hom(A,M))$, 
 all higher cohomologies (for $n\ge 3$) also vanish. 

\begin{corollary}
For every $n\ge 3$ we have $\mathrm H^n(W_1,M)=0$. 
\end{corollary}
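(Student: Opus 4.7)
The plan is to prove the corollary by a direct induction on $n \ge 3$, using the classical Hochschild dimension-shift isomorphism for unital associative algebras. The base case $n = 3$ is supplied by Theorem~\ref{thm:WeylCohomology}, which gives $\mathrm H^3(W_1, M)=0$ uniformly in the $W_1$-bimodule $M$; this is the only genuinely computational input needed.

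For the inductive step, I would invoke the following standard fact. For any unital associative algebra $A$ and any $A$-bimodule $M$, the space $\Hom_\Bbbk(A,M)$ carries an $A$-bimodule structure via $(af)(x)=a\cdot f(x)$ and $(fa)(x)=f(ax)$, and there is a canonical isomorphism
\[
\mathrm H^{n+1}(A,M)\;\cong\;\mathrm H^{n}\bigl(A,\Hom_\Bbbk(A,M)\bigr),
\]
obtained, for instance, from the Hom-tensor adjunction applied to the bar resolution $\mathrm B_\bullet$ recalled in Section~\ref{sec:MorseMatching} (equivalently, from Hochschild's original dimension-shifting trick \cite{Hoch1943}). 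Applying this with $A=W_1$ and assuming, by induction, that $\mathrm H^{n}(W_1,N)=0$ for every $W_1$-bimodule $N$, we instantiate $N=\Hom_\Bbbk(W_1,M)$ to obtain
\[
\mathrm H^{n+1}(W_1,M)\;\cong\;\mathrm H^{n}\bigl(W_1,\Hom_\Bbbk(W_1,M)\bigr)=0,
\]
closing the induction.

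There is essentially no obstacle once Theorem~\ref{thm:WeylCohomology} is in hand: the corollary is formal. It is worth emphasizing, however, that this maneuver is unavailable in the conformal setting — as pointed out in the Introduction, the conformal analogue $\mathrm{Chom}$ of $\Hom$ fails to carry a conformal bimodule structure due to locality constraints — which is precisely why the explicit Morse-matching computation of Section~4 was required for $W_1$ in the first place, rather than a dimension-shift reduction inside the conformal world.
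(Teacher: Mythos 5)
Your proof is correct and follows essentially the same route as the paper: the paper also deduces the corollary from Theorem~\ref{thm:WeylCohomology} via the classical dimension-shifting isomorphism $\mathrm H^{n+1}(A,M)\cong \mathrm H^{n}(A,\Hom(A,M))$ applied inductively to $A=W_1$. Your additional remark about why this shift is unavailable in the conformal setting matches the discussion in the Introduction.
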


{
The Hochschild cohomology is invariant under Morita equivalence of algebras,
and it is known that an algebra $A$ is Morita equivalent to the algebra of matrices $M_n(A)$ 
\cite{Keller},
\cite[Chapter~7]{Lam}, \cite[Chapter~1]{Loday} so
$\mathrm H^n(M_k(W_1),M)=\mathrm H^n(W_1,M)=0$.
}

As a corollary, we obtain the following description 
of conformal Hochschild cohomologies of 
the associative conformal algebra $\Cend_k$.

\begin{theorem}  
Let $M$ be a  conformal bimodule over 
$\Cend_k$, $k\ge 1$. 
Then
$\mathrm H^n( \Cend_k,M )=0$ for $n\ge 2$.
\end{theorem}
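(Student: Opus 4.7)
The plan is to assemble this theorem from results already proved in the paper rather than to carry out any new computation. Three ingredients are needed: the structural isomorphism $\mathcal A_+(\Cend_k) \simeq M_k(W_1)$ recorded in Section~3, the vanishing of ordinary Hochschild cohomology of $W_1$ in degrees $\ge 3$ established in Theorem~\ref{thm:WeylCohomology} and its corollary, and the comparison between ordinary and conformal Hochschild cohomology provided by Proposition~\ref{prop:MainTool}.

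For $n \ge 3$, I would set $C = \Cend_k$ and $A = \mathcal A_+(C) \simeq M_k(W_1)$, with $M$ viewed as an $A$-bimodule in the canonical way coming from the conformal bimodule structure. By the corollary to Theorem~\ref{thm:WeylCohomology}, $\mathrm H^n(W_1, N) = 0$ for every $W_1$-bimodule $N$ and every $n \ge 3$. Since Hochschild cohomology is Morita invariant and $M_k(W_1)$ is Morita equivalent to $W_1$, this vanishing transfers to $\mathrm H^n(A, M) = 0$ for $n \ge 3$. The hypothesis of Proposition~\ref{prop:MainTool} is therefore satisfied, so $\mathrm H^n(\overline{\C}^\bullet(A, M)) = 0$ for $n \ge 3$; by \cite[Theorem 6.1, Corollary 6.1]{BKV} this is precisely the conformal Hochschild cohomology $\mathrm H^n(\Cend_k, M)$.

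The case $n = 2$ is not covered by Proposition~\ref{prop:MainTool} and must be handled separately. Here one simply quotes the main result of \cite{Dolg2009}, where the triviality of $\mathrm H^2(\Cend_k, M)$ for every conformal bimodule was already established; this is why the introduction describes that work as a partial solution of the problem of \cite{BKV}.

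The genuine technical obstacle in this program was the $n = 3$ vanishing for $W_1$, which was dispatched in Section~4 via the Anick-resolution computation. I do not expect any difficulty in the present theorem beyond cleanly citing that result, Morita invariance, and the structural isomorphism $\mathcal A_+(\Cend_k)\simeq M_k(W_1)$; the theorem itself is the bookkeeping that packages them together.
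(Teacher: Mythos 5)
Your proposal is correct and follows exactly the paper's own route: the vanishing of $\mathrm H^n(W_1,\cdot)$ for $n\ge 3$ from Theorem~\ref{thm:WeylCohomology} and its corollary, transferred to $M_k(W_1)\simeq\mathcal A_+(\Cend_k)$ by Morita invariance, then fed into Proposition~\ref{prop:MainTool}, with the $n=2$ case quoted from \cite{Dolg2009}. No gaps; this is the same bookkeeping the paper performs.
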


\begin{proof}
Proposition~\ref{prop:MainTool} immediately implies 
$\mathrm H^n( \Cend_k,M )=0$ for $n\ge 3$. 
For $n=2$, the result was obtained in \cite{Dolg2009}.
\end{proof}

\subsection*{Acknowledgments}
The work was supported by Russian Science Foundation, project 23-21-00504.

\end{document}